\documentclass[11pt]{article}
\usepackage{amsmath,amssymb,amsthm,mathtools,enumitem}
\usepackage[a4paper,margin=1in]{geometry}
\usepackage{float}

\newtheorem{theorem}{Theorem}
\newtheorem{lemma}{Lemma}

\newcommand{\R}{\mathbb R}
\newcommand{\eps}{\varepsilon}

\title{An Upper Bound for Discrete Isometric Filling of Cycles}
\author{Runtai He}
\date{}

\begin{document}
\maketitle
\begin{abstract}
We study the discrete graph-metric analogue of Gromov's filling area problem for the cycle graph \(C_n\).  An abstract triangulation \(K\) is an isometric filling of \(C_n\) if \(\partial K=C_n\) and the graph distance between any two boundary vertices is not shortened inside the \(1\)-skeleton of \(K\).  Let \(D(n;\epsilon)\) denote the minimum number of vertices in a \((1-\epsilon)\)-Lipschitz filling of \(C_n\), and set
\[
D^*=\liminf_{\epsilon\to0^+}\liminf_{n\to\infty}\frac{D(n;\epsilon)}{n^2}.
\]
Previous work gives the general lower bound \(D^*\ge 1/8\), while discretizing the hemisphere gives the upper bound
\[
D^*\le \frac{1}{\pi\sqrt3}.
\]
In this paper we give an explicit discrete construction which improves the hemispherical upper bound.  More precisely, we construct isometric fillings \(K_n\) of \(C_n\) with
\[
|V(K_n)|\le \left(\frac16+o(1)\right)n^2,
\]
and hence
\[
D^*\le \frac16<\frac{1}{\pi\sqrt3}.
\]
This can directly illustrate the discrete filling area problem is a proper relaxation of Gromov's original filling area problem and cannot be used to settle Gromov's conjecture.
The construction is a concentric annular filling.
\end{abstract}

\section{Introduction}

Gromov's filling area problem asks for the least possible area of a compact Riemannian surface which fills a prescribed boundary curve without introducing shortcuts between boundary points.\cite{Gromov1983}  In its classical form, the boundary is the Riemannian circle of circumference \(2\pi\), and the conjectural minimizer is the hemisphere.  More precisely, if \(M\) is a compact orientable Riemannian surface with boundary \(S^1\) and
\[
d_M(x,y)=d_{S^1}(x,y)
\qquad
\text{for all }x,y\in S^1,
\]
then Gromov's filling area conjecture predicts that
\[
\operatorname{Area}(M)\ge 2\pi.
\]
The hypothesis is not merely topological: it requires that the filling introduce no shortcut between boundary points.  For instance, the Euclidean disk fills the circle topologically, but it does not fill it isometrically, since antipodal points become much closer through the disk than along the boundary.

A natural combinatorial relaxation replaces metric surfaces by abstract triangulations and replaces intrinsic length by graph distance in the \(1\)-skeleton.\cite{BriggsWells2026}  Thus an abstract triangulation \(K\) with boundary \(C_n\) is an isometric filling of \(C_n\) if
\[
d_K(x,y)=d_{C_n}(x,y)
\qquad
\text{for all }x,y\in V(C_n).
\]
Equivalently, the \(1\)-skeleton of \(K\) contains no shortcut between boundary vertices.  One may also consider approximate fillings: for \(0<\delta\le1\), a \(\delta\)-Lipschitz filling of \(C_n\) is an abstract triangulation \(K\) with \(\partial K=C_n\) such that
\[
d_K(x,y)\ge \delta\, d_{C_n}(x,y)
\qquad
\text{for all }x,y\in V(C_n).
\]
The asymptotic filling density is then captured by
\[
D^*
=
\liminf_{\epsilon\to0^+}\liminf_{n\to\infty}
\frac{D(n;\epsilon)}{n^2},
\]
where \(D(n;\epsilon)\) is the minimum number of vertices in a \((1-\epsilon)\)-Lipschitz filling of \(C_n\).

Briggs and Wells proved that every \(\delta\)-Lipschitz filling of \(C_n\) has at least
\[
\frac{\delta^3}{8}(n-1)^2+\frac12(n-1)
\]
vertices.\cite{BriggsWells2026}  In particular, for isometric fillings this gives the lower bound
\[
D^*\ge \frac18.
\]
On the other hand, a balanced triangular discretization of the hemisphere gives
\[
D^*\le \frac{1}{\pi\sqrt3}.
\]
Thus before the present construction the natural range was
\[
\frac18\le D^*\le \frac{1}{\pi\sqrt3}.
\]
The upper bound coming from the hemisphere is especially suggestive because of the continuous filling area conjecture.  If the discrete problem were an exact combinatorial counterpart of the Gromov's problem, one might expect the hemispherical constant to be optimal.  The purpose of this paper is to show that this is not the case for the graph-metric model.

Our main result is the following.

\begin{theorem}\label{thm:main}
\[
D^*\le \frac16.
\]
In particular,
\[
\frac16<\frac{1}{\pi\sqrt3}.
\]
Thus the discrete graph-metric filling problem admits fillings asymptotically smaller than the upper bound obtained by discretizing the hemisphere with a triangular grid. This confirms the expectation raised in \cite{BriggsWells2026} that the discrete problem is a proper relaxation of the continuous filling area problem.
\end{theorem}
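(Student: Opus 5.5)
The plan is to prove Theorem~\ref{thm:main} by building explicit concentric annular fillings \(K_n\) of \(C_n\) and checking the no-shortcut condition with a \(1\)-Lipschitz certificate. The construction is nested cycles \(L_0=C_n, L_1, L_2,\dots,L_m\), where \(L_k\) has length \(\ell_k=n-3k\), and \(m\approx n/3\) is chosen so that \(\ell_m=O(1)\). I have not checked that the no-shortcut condition holds for this construction; the third paragraph is where it could fail. Consecutive cycles \(L_k,L_{k+1}\) are joined by a triangulated annulus that is a ``zig-zag strip'': every edge of \(L_{k+1}\) forms a triangle with one vertex of \(L_k\), and every edge of \(L_k\) forms a triangle with one vertex of \(L_{k+1}\). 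The three ``defects'' per layer, where one outer vertex has two consecutive inner neighbours fewer than average, are placed at angles \(0,2\pi/3,4\pi/3\). They are rotated by a fixed small amount from layer to layer so they do not line up radially. The innermost cycle \(L_m\) is capped by a fan from one extra vertex, or any bounded triangulated disk. It is routine to check that \(K_n\) is an abstract triangulated disk with \(\partial K_n=C_n\). Its vertex count is
\[
\sum_{k=0}^{m}(n-3k)+O(1)=\frac{n^2}{6}+O(n),
\]
which gives the required \((\tfrac16+o(1))n^2\).

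The proof of isometry uses an angular coordinate. Each vertex \(v\in L_k\) gets a position \(\theta(v)\in\R/\R\) (circle of circumference \(1\)), with the vertices of \(L_k\) evenly spaced at gaps \(1/\ell_k\). The strip triangulation is chosen so that every edge between \(L_k\) and \(L_{k+1}\) joins vertices whose positions differ by at most about \(\tfrac12(1/\ell_{k+1}+1/\ell_k)\). Fix a boundary vertex \(x\). For \(v\in L_k\), define
\[
f_x(v)= k + \ell_k\cdot\operatorname{dist}_{\R/\R}\bigl(\theta(v),\theta(x)\bigr)\cdot\lambda_k ,
\]
with weights \(\lambda_k\le1\) to be tuned; the natural first try is \(\lambda_k=1\) for angular distances up to \(1/2\). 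On the boundary \(L_0\), \(f_x(y)=d_{C_n}(x,y)\). So it suffices to show \(f_x\) is \(1\)-Lipschitz on the \(1\)-skeleton, and then \(d_K(x,y)\ge f_x(y)-f_x(x)=d_{C_n}(x,y)\). Edges inside a layer \(L_k\) change \(f_x\) by at most \(\ell_k\cdot(1/\ell_k)=1\). An inward edge from \(L_k\) to \(L_{k+1}\) adds \(+1\) from the layer index. It changes the angular term by \(\ell_{k+1}t'-\ell_k t\), where \(t,t'\) are the angular distances to \(\theta(x)\). Since \(\ell_{k+1}=\ell_k-3\) and \(t\le 1/2\), the term \(-3t\ge -3/2\) is the saving that pays for the radial step. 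The lateral shift \(|t'-t|\lesssim 1/\ell_k\) costs at most about \(1/2\) in \(\ell_k\)-units. So the net change is at least \(1-3/2-1/2=-1\), and at most \(1+1/2\) in the other direction.

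The main obstacle is exactly this edge-by-edge inequality in the worst configurations. The upper bound \(+1+\tfrac12\) for outward-to-inward edges when \(t\) is tiny exceeds \(1\), so \(\lambda_k\) or the definition must be adjusted. My first fix is to replace the layer term \(k\) by \(\tfrac12 k\) plus a correction. Alternatively, I would take \(f_x(v)=\min\) over the two angular directions of \(k+\ell_k t\), and bound only the direction \(f_x(v')-f_x(v)\le1\) needed for the Lipschitz lower bound along paths, handling the two directions separately. A second subtlety is near-antipodal pairs, \(t\approx1/2\), where both arcs compete; the min formulation handles this. A third is the three defect positions, where the local spacing is off by one. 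These contribute \(O(1)\) errors per crossing, and I would absorb them by perturbing \(\ell_k\) slightly. This gives at worst a \((1-\epsilon)\)-Lipschitz filling with \(\epsilon=O(1/n)\), which is enough for the definition of \(D^*\). If exact isometry fails, I would fall back to that approximate statement, since \(D^*\) takes \(\liminf_{\epsilon\to0}\). The inequality \(\tfrac16<\tfrac1{\pi\sqrt3}\) follows from \(\pi\sqrt3\approx5.44<6\).
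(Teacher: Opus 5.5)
Your vertex count is right, but the complex you build has genuine shortcuts of size proportional to $n$. So it is neither an isometric filling nor $(1-\epsilon)$-Lipschitz for small $\epsilon$, and the fallback with $\epsilon=O(1/n)$ does not hold.

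The culprit is the linear profile $\ell_k=n-3k$, which shrinks faster than the critical rate near the boundary. In your zig-zag strip, the rightward inner neighbour of an outer vertex $U$ is the apex of the triangle on $UU^{+}$. So a path that descends by rightward inner neighbours and later ascends by rightward outer neighbours gains, on average over the starting vertex, about half a step of $L_k$ per crossing. In units where the boundary has length $n$, that is about $n/(2\ell_k)$. Take antipodal $x,y$, descend $s$ layers, walk along $L_s$, and ascend. The total drift is at least about $s$, so the length is roughly
\[
2s+\frac{n-3s}{n}\Bigl(\frac n2-s\Bigr)=\frac n2-\frac s2+\frac{3s^2}{n},
\]
which equals $\frac n2-\frac n{48}$ at $s=n/12$. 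With the exact drift $\sum_{k<s}n/\ell_k$ and the best depth, the continuum prediction is $\frac{2-e^{-1/2}}{3}\,n\approx 0.93\cdot\frac n2$. Your own estimate at $t=1/2$ already shows the imbalance: a one-layer round trip costs $2$ radial edges but saves $3/2$ on the half circle and about $1$ from drift. No choice of weights $\lambda_k$, test function, or defect placement can certify a lower bound that actual short paths violate.

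The missing idea is to choose the profile from this balance rather than from the target count. Suppose the cycle at depth $tn$ has length $np(t)$ and each crossing drifts at most half a step. Then a path to depth $tn$ between boundary points at distance $S$ has length at least about $2tn+p(t)\bigl(S-n\int_0^t du/p(u)\bigr)_+$. Requiring this to be at least $S$ for all $S\le n/2$ forces $p'(0)\ge-2$. Equality at $S=n/2$ gives $p'/p=-2/(1-4t)$, i.e.\ the paper's $q(t)=\sqrt{1-4t}$; this is Lemma~\ref{lem:core}, with $2t=\frac{1-q^2}{2}$ and $I(t)=\frac{1-q}{2}$. It is a coincidence that $\int_0^{1/3}(1-3t)\,dt=\int_0^{1/4}\sqrt{1-4t}\,dt=\frac16$, which is why your count came out right.

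Because $\sqrt{1-4t}$ is exactly critical, the paper also needs three further ingredients:
\begin{enumerate}
\item A collar of $\rho n$ equal-length $n$-cycles creates positive slack.
\item Shrinking is sparse, with only $\lceil\sqrt n\rceil$ transition annuli. A shrinking annulus only admits the full-step drift bound $n/M$, against $n/(2m)$ for an equal-length one. Shrinking at every layer, as you do, gives up the half-step bound the balance depends on.
\item The cone cap sits at depth about $(\rho+t_\eta)n$, so any path through it costs more than $n/2$.
\end{enumerate}
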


The proof is constructive.  For each sufficiently large \(n\), we build an isometric filling \(K_n\) of \(C_n\) whose vertex count is asymptotic to \(n^2/6\).  The construction consists of three parts.

First, we place a protective collar of \(\rho n\) equal-length annuli near the boundary, where \(\rho>0\) is fixed during the proof and later allowed to tend to \(0\).  This collar is not responsible for the final constant; rather, it provides a positive error margin in the distance estimate.

Second, we insert the main annular region.  Its cycle lengths are chosen according to the square-root profile
\[
m(t)\approx n q(t),
\qquad
q(t)=\sqrt{1-4t},
\qquad
0\le t\le t_\eta,
\]
where \(t\) denotes depth divided by \(n\), and \(t_\eta=(1-\eta^2)/4\) is the stopping time at which \(q(t_\eta)=\eta\).  The square-root profile is the key point of the construction.  It comes from balancing two competing effects.  If a path travels inward to depth \(tn\), it pays a radial cost of approximately \(2tn\).  At the same time, inner cycles are shorter, so horizontal travel becomes cheaper; additionally, slanted edges between consecutive cycles may produce some angular displacement ``for free.''  The profile \(q(t)=\sqrt{1-4t}\) is exactly the critical profile for which this possible saving is balanced by the radial cost.

Third, once the inner cycle has length about \(\eta n\), we close it by a cone.  This cone cap creates very short paths inside the innermost cycle, but it is placed sufficiently deep that any boundary-to-boundary path using it must already pay more than \(n/2\) in radial cost.  Since \(n/2\) is the largest possible distance between two vertices of \(C_n\), the cone cap cannot create a boundary shortcut.

A central technical issue is to control angular drift.  We assign to every cycle an auxiliary circular coordinate of circumference \(n\), so that one edge of the outer boundary has coordinate length \(1\).  Equal-length annuli are triangulated with a half-step offset; hence crossing such an annulus once changes the circular coordinate by at most \(n/(2m)\), where \(m\) is the cycle length.  Shrinking annuli are triangulated by a staircase construction, and crossing such an annulus once changes the coordinate by at most \(n/M\), where \(M\) is the inner cycle length.  These local bounds imply a global drift estimate for arbitrary paths.  If a path from \(x\) to \(y\) reaches deepest layer \(C_h\), then it must pay a radial cost \(2h\), and after subtracting the maximum angular displacement contributed by slanted edges, the remaining angular separation must be paid by horizontal edges on cycles of length at least \(m_h\).

The main region is not made to shrink at every layer.  Instead, we approximate the square-root profile by a step function.  Each block consists of many equal-length annuli, and only \(B=\lceil \sqrt n\rceil=o(n)\) transition annuli perform actual shrinking.  This sparse shrinking is another essential feature of the proof.  Equal-length annuli have the optimal half-step drift bound, while transition annuli have a worse one-step drift bound.  Since the number of transition annuli is \(o(n)\), their total contribution to the drift is \(o(n)\) and is absorbed by the protective collar.

The vertex count follows from the same profile.  The protective collar contributes \(\rho n^2+o(n^2)\) vertices.  The main region contributes
\[
n^2\int_0^{t_\eta} q(t)\,dt+o(n^2).
\]
For \(q(t)=\sqrt{1-4t}\), this integral is
\[
\int_0^{t_\eta}\sqrt{1-4t}\,dt
=
\frac16(1-\eta^3).
\]
The cone cap contributes only lower-order terms.  Therefore, for fixed \(\rho\) and \(\eta\),
\[
\limsup_{n\to\infty}\frac{|V(K_n)|}{n^2}
\le
\rho+\frac16(1-\eta^3).
\]
Choosing \(0<\eta<\sqrt{\rho}\) and then letting \(\rho\to0\) gives the asserted upper bound \(D^*\le1/6\).

The rest of the paper is organized as follows.  Section~1 records the coordinate convention and notation used throughout the proof.  Section~2 constructs the two elementary annuli: equal-length annuli and shrinking annuli.  Section~3 proves the path drift estimate.  Section~4 gives the full construction.  Section~5 establishes the uniform estimates needed for the stepwise square-root profile.  Section~6 proves the core analytic inequality.  Section~7 verifies that the construction is isometric on the boundary.  Finally, Section~8 counts vertices and completes the proof of the theorem.
\section{Coordinate Convention and Notation}
The proof below avoids normalized angular variables.  Instead, all circular displacements are measured in the same units as the outer boundary cycle.  Namely, the full turn has length $n$, not $1$ and not $2\pi$.  Thus an edge of the boundary cycle $C_n$ has circular length $1$.  This convention keeps the distance estimates in edge-count units.

More precisely, all auxiliary coordinates take values in the circle
\[
\R/n\mathbb Z.
\]
The auxiliary coordinates are chosen once and for all as the cycles are created. 
They are not chosen independently for different annuli.

More precisely, suppose that the \(r\)-th cycle \(C_r\) has length \(m_r\), and write its vertices in cyclic order as
\[
W^{(r)}_0,W^{(r)}_1,\ldots,W^{(r)}_{m_r-1}.
\]
We assign a number \(\alpha_r\in \mathbb R/n\mathbb Z\), called the phase of \(C_r\), so that
\[
\Theta(W^{(r)}_i)=\alpha_r+\frac{ni}{m_r}\pmod n.
\]
Thus the vertices of \(C_r\) are equally spaced in the auxiliary circle of circumference \(n\).

The phase is determined recursively.  We start with \(\alpha_0=0\) on the boundary cycle \(C_0=C_n\).  If an equal-length annulus of length \(m\) is inserted between \(C_r\) and \(C_{r+1}\), then the inner cycle is placed halfway between consecutive vertices of \(C_r\), so
\[
\alpha_{r+1}=\alpha_r+\frac{n}{2m}.
\]
If a shrinking annulus from length \(m\) to length \(M\) is inserted, then the inner cycle is given the same phase:
\[
\alpha_{r+1}=\alpha_r.
\]
In this way, every cycle receives its coordinates exactly once, at the moment it is created. 
These coordinates are only an auxiliary device for estimating circular displacement along edges; they are not an additional metric on the two-dimensional complex.
For two points $a,b\in\R/n\mathbb Z$, let
\[
\|a-b\|_n
\]
denote the shorter circular distance on the circle of circumference $n$.

\section{Two elementary annuli}

\subsection{Equal-length annuli}

Let the outer cycle be
\[
U_0,U_1,\dots,U_{m-1},
\]
and assume that its coordinates have already been fixed in the recursive sense above:
\[
\Theta(U_i)=\alpha+\frac{ni}{m}\pmod n.
\]
We now create a new inner $m$-cycle
\[
V_0,V_1,\dots,V_{m-1}
\]
and assign its coordinates halfway between consecutive outer vertices:
\[
\Theta(V_i)=\alpha+\frac{n(i+1/2)}{m}\pmod n.
\]
Equivalently, the new inner cycle has phase
\[
\alpha' = \alpha+\frac{n}{2m},
\]
so its coordinates are again of the recursive form
\[
\Theta(V_i)=\alpha'+\frac{ni}{m}\pmod n.
\]
Add the triangles
\[
(U_i,U_{i+1},V_i),\qquad (U_{i+1},V_i,V_{i+1}),
\]
where indices are taken modulo $m$.  This gives a triangulated annulus from an $m$-cycle to an $m$-cycle.  Each slanted edge has circular displacement at most
\[
\frac{n}{2m}.
\]
For example,
\[
\left\|\Theta(U_i)-\Theta(V_i)\right\|_n
=\frac{n}{2m},
\qquad
\left\|\Theta(U_{i+1})-\Theta(V_i)\right\|_n
=\frac{n}{2m}.
\]

\begin{figure}[H]
\centering
\includegraphics[width=0.6\textwidth]{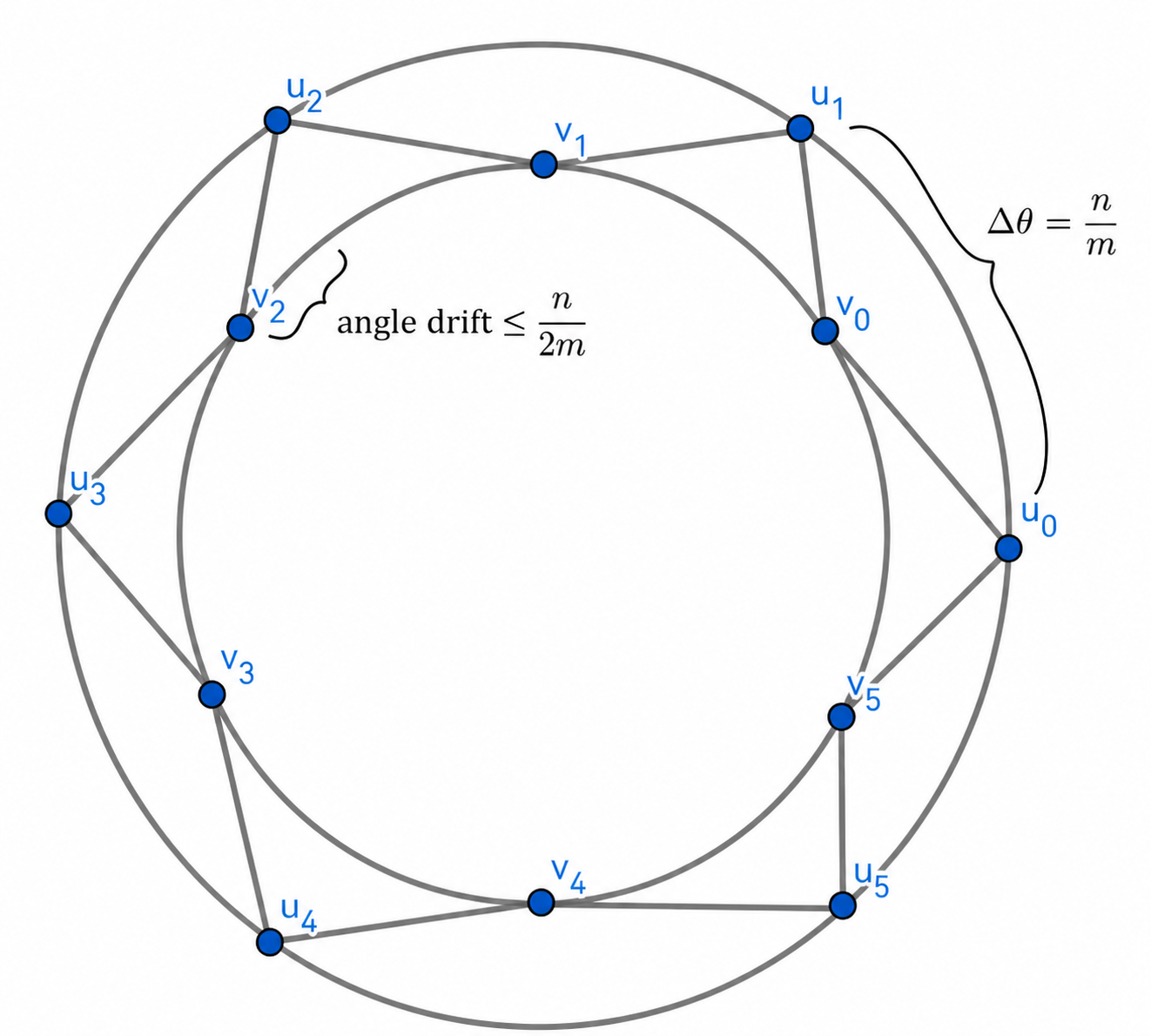}
\caption{Equal-length annulus}
\end{figure}

\subsection{Shrinking annuli}

We next give the corresponding construction when the inner cycle is shorter.

\begin{lemma}[Shrinking annulus triangulation]\label{lem:shrinking-annulus}
Let $m\ge M\ge3$.  Suppose the outer $m$-cycle
\[
U_0,U_1,\dots,U_{m-1}
\]
has already been assigned coordinates
\[
\Theta(U_i)=\alpha+\frac{ni}{m}\pmod n.
\]
Then one can create a new inner $M$-cycle
\[
V_0,V_1,\dots,V_{M-1}
\]
and triangulate the annulus between the two cycles so that, with the inner coordinates defined by the same phase
\[
\Theta(V_j)=\alpha+\frac{nj}{M}\pmod n,
\]
every slanted edge has circular displacement at most
\[
\frac{n}{M}.
\]
\end{lemma}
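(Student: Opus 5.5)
We will build the triangulation as a ``staircase'' (merge) of the two cyclically ordered vertex lists by their auxiliary coordinates. Unroll the circle \(\R/n\mathbb Z\) starting at the common phase \(\alpha\), so that outer vertices sit at positions \(x_i=ni/m\) for \(0\le i\le m\) and inner vertices at \(y_j=nj/M\) for \(0\le j\le M\), with \(x_0=y_0=0\) and \(x_m=y_M=n\) identified with \(U_0=U_m\), \(V_0=V_M\). We start with the slanted edge \(U_0V_0\), whose displacement is \(0\). We then walk along both cycles simultaneously. At the current slanted edge \(U_iV_j\) there are two possible moves. We either add the triangle \((U_i,U_{i+1},V_j)\) and move to \(U_{i+1}V_j\), or add the triangle \((U_i,V_j,V_{j+1})\) and move to \(U_iV_{j+1}\). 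The rule is the usual merge rule: advance whichever of \(U_{i+1},V_{j+1}\) has the smaller coordinate, breaking ties in favour of the outer cycle. We stop when we reach \(U_mV_M=U_0V_0\). This produces exactly \(m+M\) triangles forming a triangulated annulus whose boundary is the two given cycles.

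First we check that the result is a genuine triangulation. Each outer edge \(U_iU_{i+1}\) and each inner edge \(V_jV_{j+1}\) is used in exactly one triangle. Consecutive triangles share exactly the current slanted edge, and the process closes up because \((i,j)\) increases monotonically from \((0,0)\) to \((m,M)\). Since \(M\ge3\) and \(m\ge3\), no triangle is degenerate. No edge is duplicated, because the slanted edges \(U_iV_j\) correspond to distinct lattice points on a monotone path, with the single exception of the identified endpoints. This gives an abstract triangulated annulus.

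The main step is the displacement bound. The merge visits coordinates in sorted order, so every slanted edge \(U_iV_j\) created has the following property: the coordinates \(x_i\) and \(y_j\) are the most recently passed vertices of each cycle at some moment of the sweep. Concretely, suppose the edge was created by advancing to \(U_i\). Then \(y_j\le x_i<y_{j+1}\), where the tie rule gives the weak inequality, so \(0\le x_i-y_j<n/M\). Suppose instead it was created by advancing to \(V_j\). Then \(x_i\le y_j\) by the tie rule, and \(y_j<x_{i+1}\), so \(0\le y_j-x_i<n/m\le n/M\), using \(m\ge M\). In both cases the unrolled difference is less than \(n/M\le n/2\). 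The circular distance \(\|\Theta(U_i)-\Theta(V_j)\|_n\) is therefore at most \(n/M\). The final closing edge has displacement \(0\).

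The only delicate point is the bookkeeping at ties and at the wrap-around, which is ensuring that the last step returns to \(U_0V_0\) rather than creating a spurious edge. This is handled by the fixed tie-breaking rule and the fact that \(x_m=y_M\) exactly. We also note that shared phase is essential: it guarantees the sweep starts with zero offset.
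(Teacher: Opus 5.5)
Your merge sweep is the same staircase construction as the paper's: the paper's rule $k_i=\lfloor Mi/m\rfloor$ is exactly the merge of the two coordinate lists with ties broken toward the inner cycle, and its displacement bound $y_{k_i}\le x_i<y_{k_i+1}$ is your merge invariant in closed form. One small slip: with ties broken toward the outer cycle, advancing to $U_i$ gives $y_j\le x_i\le y_{j+1}$, not $x_i<y_{j+1}$ (for $m=M$ the edge $U_1V_0$ has displacement exactly $n/M$), so the bound is attained rather than strict, but this is harmless since the lemma only claims ``at most $n/M$''.
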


\begin{proof}
The inner cycle and its coordinates are part of the construction; they are not a reassignment of a previously coordinated cycle.  The phase of the new inner cycle is the same phase $\alpha$ as that of the outer cycle.  Cut the annulus open along a radial line.  Work temporarily with lifted vertices
\[
U_0,U_1,\dots,U_m,
\qquad
V_0,V_1,\dots,V_M,
\]
and later identify $U_m\sim U_0$ and $V_M\sim V_0$.  Define
\[
k_i=\left\lfloor \frac{Mi}{m}\right\rfloor,
\qquad i=0,1,\dots,m.
\]
Then
\[
k_0=0,
\qquad
k_m=M,
\qquad
k_{i+1}-k_i\in\{0,1\}.
\]
For each $i=0,\dots,m-1$, do the following.

If $k_{i+1}=k_i$, add the triangle
\[
(U_i,U_{i+1},V_{k_i}).
\]
If $k_{i+1}=k_i+1$, add the two triangles
\[
(U_i,U_{i+1},V_{k_i+1}),
\qquad
(U_i,V_{k_i},V_{k_i+1}).
\]
After gluing the two ends back together, $V_M$ is interpreted as $V_0$ and $U_m$ as $U_0$.

We check that this gives a valid triangulated annulus.  Each outer edge $(U_i,U_{i+1})$ occurs exactly once, namely in the step indexed by $i$.  Since the sequence $k_i$ starts at $0$, ends at $M$, and increases by $0$ or $1$ at each step, for every $j=0,\dots,M-1$ there is a unique $i$ such that
\[
k_i=j,
\qquad
k_{i+1}=j+1.
\]
Thus each inner edge $(V_j,V_{j+1})$ occurs exactly once.  In the cut-open strip, the listed triangles form an ordinary planar triangulation with no crossings and no gaps.  After the two ends are identified, the only boundary edges are the outer and inner cycle edges; every slanted edge is internal and is incident to exactly two triangles.  Since $M\ge3$, none of the listed triangles is degenerate.  Distinct triangles contain a unique outer edge or a unique inner edge, so no repeated triangle occurs.

It remains to prove the displacement estimate.  From
\[
k_i\le \frac{Mi}{m}<k_i+1
\]
we get
\[
\frac{nk_i}{M}\le \frac{ni}{m}<\frac{n(k_i+1)}{M}.
\]
Therefore $U_i$ has circular distance at most $n/M$ from both $V_{k_i}$ and $V_{k_i+1}$.  If $k_{i+1}=k_i$, the same inequality applied to $i+1$ shows that $U_{i+1}$ has circular distance at most $n/M$ from $V_{k_i}$.  If $k_{i+1}=k_i+1$, then $U_{i+1}$ has circular distance at most $n/M$ from $V_{k_i+1}$.  These are precisely the slanted edges appearing in the construction.  The endpoint gluing is harmless because the distance is taken in $\R/n\mathbb Z$.
\end{proof}

\begin{figure}[H]
\centering
\includegraphics[width=0.6\textwidth]{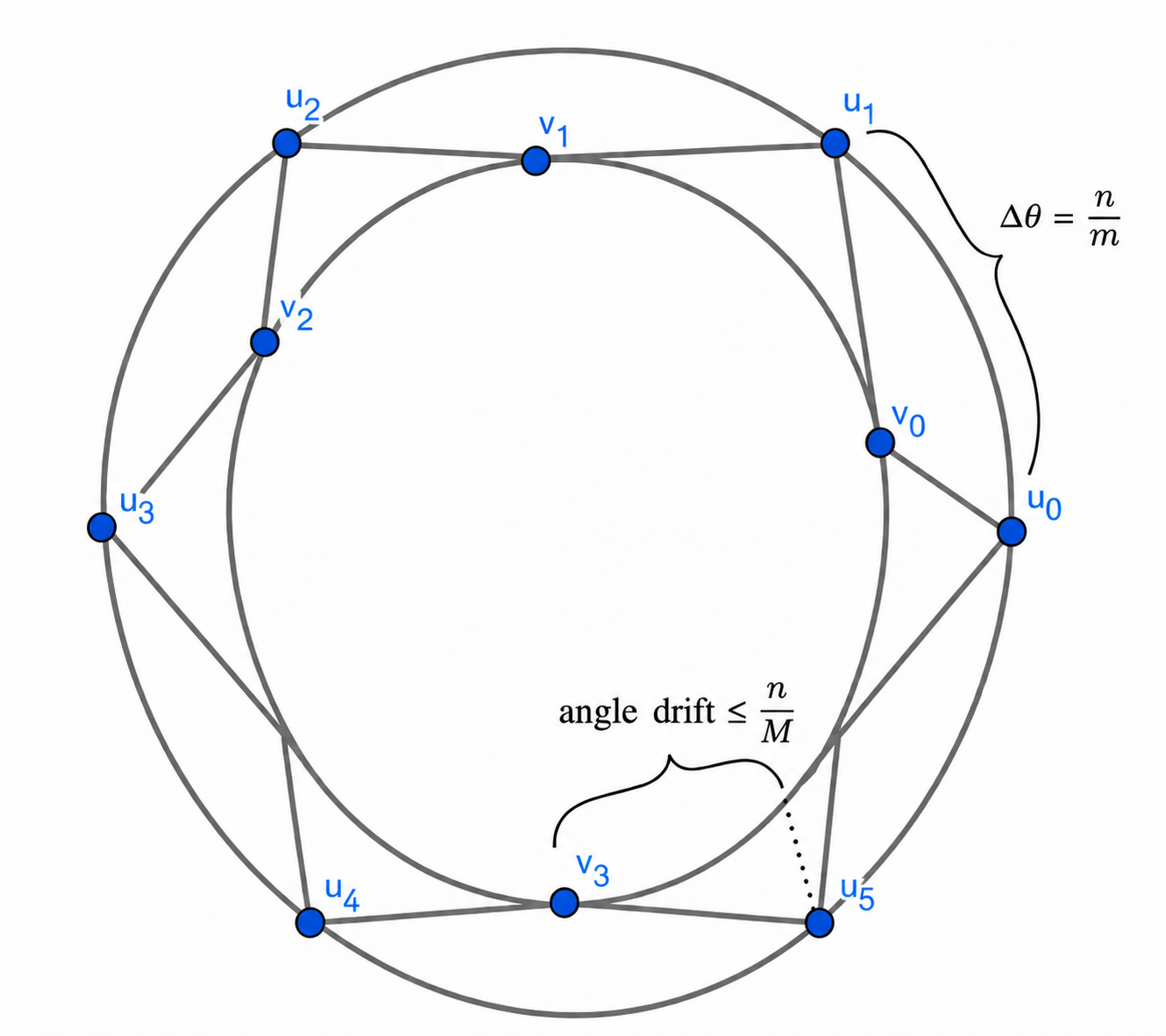}
\caption{Shrinking annulus}
\end{figure}

\section{A drift estimate for edge paths}

Consider concentric cycles
\[
C_0,C_1,\dots,C_J,
\]
where
\[
|C_r|=m_r,
\qquad
m_0=n,
\qquad
m_0\ge m_1\ge\cdots\ge m_J.
\]
The auxiliary coordinates on the cycles are chosen recursively as in the two elementary annuli above.  Equivalently, each cycle $C_r$ has a phase $\alpha_r$ such that its cyclically ordered vertices have coordinates $\alpha_r+ni/m_r\pmod n$.  Equal-length annuli update the phase by a half-step, and shrinking annuli keep the phase unchanged.  Thus every cycle is assigned a single coordinate system in $\R/n\mathbb Z$, and a horizontal edge in $C_r$ has circular length
\[
\frac{n}{m_r}.
\]

Between $C_r$ and $C_{r+1}$ we use an equal-length annulus if $m_{r+1}=m_r$, and the shrinking annulus from Lemma \ref{lem:shrinking-annulus} if $m_{r+1}<m_r$.  Define the one-crossing drift bound
\[
b_r=
\begin{cases}
\dfrac{n}{2m_r},& m_{r+1}=m_r,\\[6pt]
\dfrac{n}{m_{r+1}},& m_{r+1}<m_r.
\end{cases}
\]
Thus crossing the $r$-th annulus once changes the auxiliary coordinate by at most $b_r$.

\begin{lemma}[Path drift estimate]\label{lem:drift}
Let $x,y\in C_0$, and set
\[
L=d_{C_0}(x,y),
\qquad
0\le L\le \frac n2.
\]
Let $P:x\to y$ be an edge path whose deepest visited layer is $C_h$.  Define
\[
\mathcal A_h=2\sum_{r=0}^{h-1}b_r.
\]
Then
\[
|P|\ge 2h+\frac{m_h}{n}\,(L-\mathcal A_h)_+,
\]
where $(u)_+=\max\{u,0\}$.
\end{lemma}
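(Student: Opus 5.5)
We prove the estimate by splitting the edges of \(P\) into two classes, slanted edges (joining \(C_r\) to \(C_{r+1}\) for some \(r\)) and horizontal edges (joining consecutive vertices of a single cycle \(C_r\)). The complex built from the annuli has no other edges. The two claims are:
\begin{enumerate}[label=(\roman*)]
\item \(P\) contains at least \(2h\) slanted edges, and their total circular displacement is at most \(\mathcal A_h\), up to a harmless excess that is itself paid for by extra slanted edges;
\item the horizontal edges account for the remaining circular separation at a rate of \(n/m_r\le n/m_h\) per edge.
\end{enumerate}
Since \(x\) and \(y\) differ in \(\Theta\) by exactly \(L\) in \(\|\cdot\|_n\) (on \(C_0\) one edge has circular length \(1\)), the triangle inequality in \(\R/n\mathbb Z\) gives
\[
L\le \sum_{e\in P}\|\Delta\Theta(e)\|_n .
\]

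\textbf{Step 1: counting crossings.} For each \(r=0,\dots,h-1\), the path starts and ends on \(C_0\) and visits \(C_h\). It must therefore cross the annulus between \(C_r\) and \(C_{r+1}\) an even number \(c_r\ge 2\) of times. Each crossing is one slanted edge, with displacement at most \(b_r\) by the equal-length construction or by Lemma~\ref{lem:shrinking-annulus}. Thus the number of slanted edges is \(S=\sum_{r<h}c_r\), and the slanted displacement is at most \(\sum_r c_r b_r\).

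\textbf{Step 2: bounding the horizontal part.} Let \(H\) be the number of horizontal edges. Every horizontal edge lies on some \(C_r\) with \(r\le h\). Its circular length is \(n/m_r\le n/m_h\), because the \(m_r\) are nonincreasing. Hence
\[
L\le \sum_{r<h}c_rb_r+H\,\frac{n}{m_h}.
\]

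\textbf{Step 3: absorbing extra crossings.} The difficulty is that the excess \(c_r-2\) crossings add displacement beyond \(\mathcal A_h\). The fix is to use that each extra slanted edge also adds length \(1\) to \(|P|\). So it suffices to check \(b_r\le n/m_h\) for \(r<h\): then an extra crossing contributes displacement at most \(n/m_h\) while costing one unit, exactly like a horizontal edge. This check holds in both cases:
\begin{itemize}
\item if \(m_{r+1}=m_r\), then \(b_r=n/(2m_r)\le n/m_h\);
\item if \(m_{r+1}<m_r\), then \(b_r=n/m_{r+1}\le n/m_h\), since \(r+1\le h\).
\end{itemize}
Writing \(E=S-2h\) for the number of extra slanted edges, we get
\[
L\le \mathcal A_h+(E+H)\frac{n}{m_h}.
\]
Therefore \(E+H\ge \frac{m_h}{n}(L-\mathcal A_h)\), and since \(E+H\ge 0\), also \(E+H\ge \frac{m_h}{n}(L-\mathcal A_h)_+\). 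Combining with \(|P|=2h+E+H\) gives the claimed bound.

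The main point needing care is Step 3, the comparison \(b_r\le n/m_h\). The rest is bookkeeping: each edge type has its displacement bounded by the construction, and the circular triangle inequality does the remaining work.
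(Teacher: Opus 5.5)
Your proof is correct and follows the paper's argument essentially step for step. You split $P$ into horizontal and slanted edges, write each crossing count as $2$ plus an excess, bound both horizontal edges and excess crossings by $n/m_h$ using $b_r\le n/m_h$, and then rearrange the circular triangle inequality; your explicit two-case check of $b_r\le n/m_h$ (using $r+1\le h$ in the shrinking case) spells out a step the paper asserts in one line.
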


\begin{proof}
Let $H$ be the total number of horizontal edges in $P$.  Let $c_r$ be the number of slanted edges of $P$ crossing the annulus $C_r\leftrightarrow C_{r+1}$.  Since $P$ starts and ends on $C_0$ and reaches $C_h$, it must cross each annulus $C_r\leftrightarrow C_{r+1}$, $0\le r<h$, at least twice.  Write
\[
c_r=2+e_r,
\qquad e_r\ge0.
\]
Then
\[
|P|=H+2h+
\sum_{r=0}^{h-1}e_r.
\]

The endpoints $x,y$ have circular separation $L$ in $\R/n\mathbb Z$.  Therefore the sum of the circular displacements of the edges of $P$ is at least $L$.  A horizontal edge in $C_r$ contributes at most $n/m_r\le n/m_h$, since $m_r\ge m_h$ for $r\le h$.  A slanted edge crossing the $r$-th annulus contributes at most $b_r\le n/m_h$.  Hence
\[
L
\le
\frac{n}{m_h}H
+2\sum_{r=0}^{h-1}b_r
+\frac{n}{m_h}\sum_{r=0}^{h-1}e_r.
\]
That is,
\[
L\le \frac{n}{m_h}H+\mathcal A_h+\frac{n}{m_h}\sum_{r=0}^{h-1}e_r.
\]
Rearranging gives
\[
H+\sum_{r=0}^{h-1}e_r
\ge
\frac{m_h}{n}(L-\mathcal A_h).
\]
Taking the positive part on the right and substituting into the expression for $|P|$ proves the lemma.
\end{proof}

\section{The construction}

Fix arbitrary $\rho>0$.  Choose $\eta>0$ such that
\[
0<\eta<1,
\qquad
\eta^2<\rho.
\]
All error terms in the distance proof below are taken as $n\to\infty$ with $\rho$ and $\eta$ fixed.

Set
\[
q(t)=\sqrt{1-4t},
\qquad
 t_\eta=\frac{1-\eta^2}{4},
\]
so that
\[
q(t_\eta)=\eta.
\]
Let
\[
w=\lceil \rho n\rceil,
\qquad
B=\lceil \sqrt n\rceil,
\qquad
\Delta_n=\frac{t_\eta}{B}.
\]
Divide $[0,t_\eta]$ into $B$ equal subintervals by
\[
t_b=b\Delta_n,
\qquad b=0,1,\dots,B.
\]
Define
\[
M_b=\left\lceil n q(t_b)\right\rceil.
\]
Since $q(0)=1$, we have $M_0=n$ exactly.  Also $M_B=\lceil n\eta\rceil$, so for all sufficiently large $n$ every cycle used in the main region has length at least $3$.

For $b=0,\dots,B-1$, define
\[
L_b=\left\lfloor n\Delta_n\right\rfloor.
\]
It should be noted that although we use the subscript $b$ in $L_b$, this does not imply that it is related to the index $b$. In fact, it is only related to $n$ and $\eta$. The use of the subscript $b$ is intended to indicate the Riemann sum structure that will be utilized subsequently.

The complex $K_n$ is built as follows.

\begin{enumerate}[label=(\arabic*)]
\item \textbf{Protective collar.}  Starting from the boundary cycle $C_0=C_n$, insert $w$ equal-length annuli, each of length $n$.

\item \textbf{Main region.}  For each $b=0,\dots,B-1$, insert $L_b$ equal-length annuli of length $M_b$.  Then insert one transition annulus from length $M_b$ to length $M_{b+1}$: if $M_{b+1}<M_b$, use Lemma \ref{lem:shrinking-annulus}; if $M_{b+1}=M_b$, use an equal-length annulus.

\item \textbf{Cone cap.}  The final inner cycle has length $M_B=\lceil n\eta\rceil$.  If it is written as
\[
Z_0,Z_1,\dots,Z_{M_B-1},
\]
add one central vertex $o$ and the triangles
\[
(o,Z_i,Z_{i+1}),
\qquad i=0,1,\dots,M_B-1,
\]
with indices modulo $M_B$.
\end{enumerate}

This produces an abstract triangulation whose boundary is exactly the original outer cycle $C_n$.

\begin{figure}[H]
\centering
\includegraphics[width=0.8\textwidth]{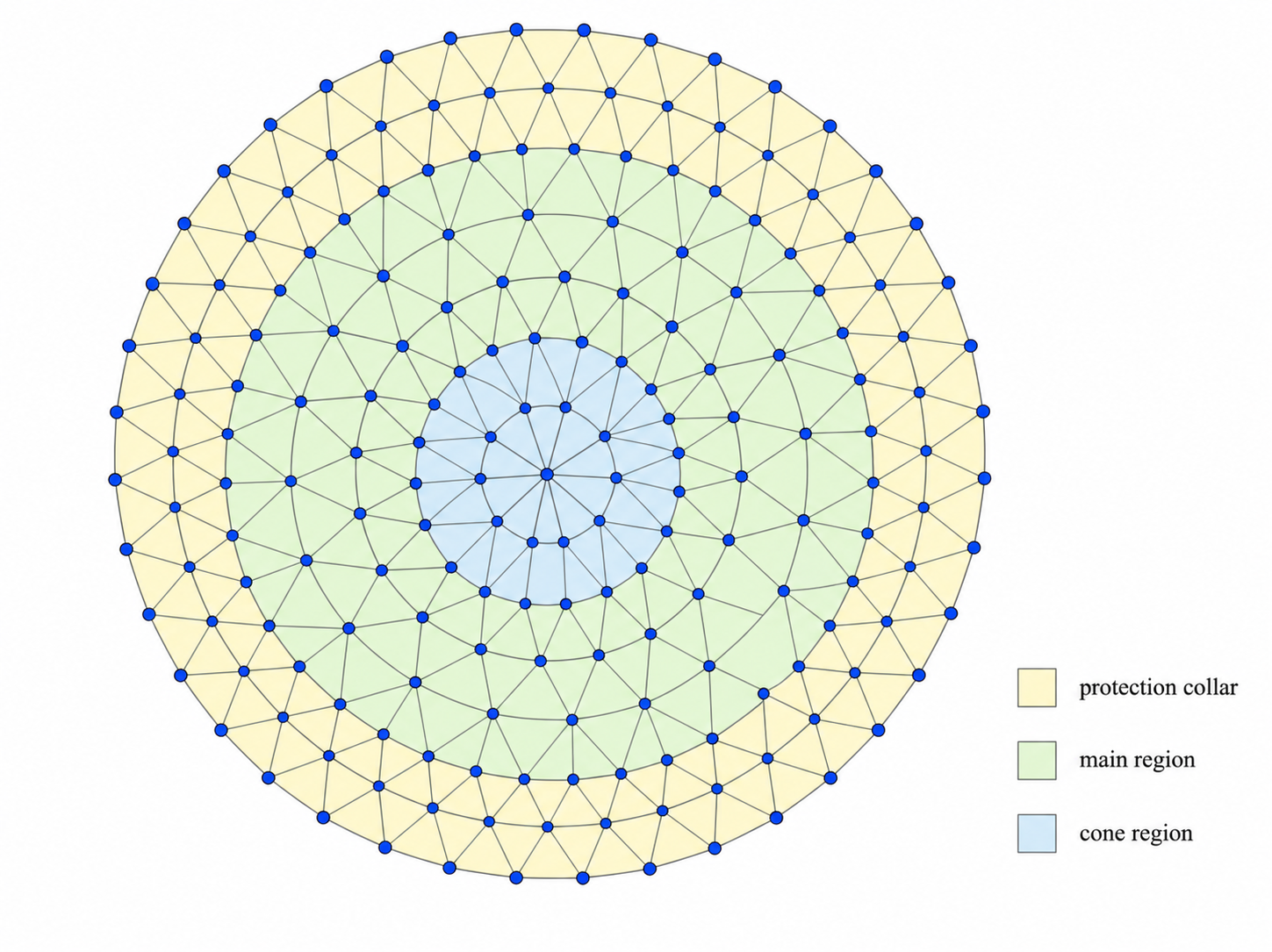}
\caption{Construction of the Discrete Isometric Filling}
\end{figure}

\section{Uniform estimates for the stepwise profile}

The main region is a stepwise approximation of the square-root profile $q(t)$.  We need estimates that hold uniformly for every possible deepest layer of a path.

For a layer $C_h$ lying in the main region, define $g_h$ to be the number of equal-length annuli in the main region passed before reaching $C_h$.  Put
\[
\tau_h=\frac{g_h}{n}.
\]
Thus $\tau_h$ records the depth inside the main region in units of $n$; it ignores the $B=\lceil \sqrt n\rceil=o(n)$ transition annuli, whose total effect is lower order.

\begin{lemma}[Uniform step-profile estimates]\label{lem:uniform}
Fix $\rho,\eta>0$.  There exists a sequence $\eps_n\to0$ such that for every path entering the main region but not the central cone, if $C_h$ is its deepest layer, then
\[
\left|2h-(2\rho n+2\tau_h n)\right|\le \eps_n n,
\]
\[
\left|\frac{m_h}{n}-q(\tau_h)\right|\le \eps_n,
\]
and
\[
\left|\mathcal A_h-\bigl(\rho n+nI(\tau_h)\bigr)\right|\le \eps_n n,
\qquad
I(t)=\int_0^t\frac{du}{q(u)}.
\]
The same $\eps_n$ works for all possible deepest layers $h$.
\end{lemma}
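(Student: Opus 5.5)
The plan is to parametrize every layer $C_h$ in the main region explicitly by a pair $(b,s)$, meaning that $C_h$ lies in block $b$ after $s$ of its $L_b$ equal-length annuli, with $0\le s\le L_b$. Write $L^\ast=\lfloor n\Delta_n\rfloor$ for the common value of the $L_b$. Then
\[
h=w+bL^\ast+b+s,
\qquad
g_h=bL^\ast+s,
\qquad
m_h=M_b .
\]
The extra term $b$ in $h$ counts the transition annuli already passed. (A layer sitting exactly at the start of block $b$ is $(b,0)$, or equivalently the end of block $b-1$; both descriptions give the same $g_h$ up to the transition count, and the argument below covers either.) Each of the three estimates then reduces to bounding an explicit discrete quantity by its continuum analogue, with an error that does not depend on $(b,s)$. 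We take $\eps_n$ to be the maximum of the resulting error bounds.

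\emph{First estimate (depth).} We have $2h-2\rho n-2\tau_h n=2(w-\rho n)+2b$. Since $|w-\rho n|\le1$ and $b\le B=\lceil\sqrt n\rceil$, this is $O(\sqrt n)=o(n)$, uniformly in $h$.

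\emph{Second estimate (cycle length).} We have $m_h/n=M_b/n=q(t_b)+O(1/n)$. It remains to compare $t_b$ with $\tau_h$. Note that $bL^\ast/n\in[b\Delta_n-b/n,\;b\Delta_n]$, so $bL^\ast/n=t_b+O(B/n)$. Also $s/n\le L^\ast/n\le\Delta_n=O(1/\sqrt n)$. Hence $|\tau_h-t_b|=O(n^{-1/2})$.

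A preliminary point is needed: $\tau_h$ must lie in $[0,t_\eta]$. This holds because $g_h\le BL^\ast\le nt_\eta$. On $[0,t_\eta]$ we have $q\ge\eta$, so $q'(t)=-2/q(t)$ satisfies $|q'|\le 2/\eta$, and $q$ is Lipschitz there. Therefore $|m_h/n-q(\tau_h)|\le O(1/n)+(2/\eta)\,O(n^{-1/2})$, uniformly in $h$.

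\emph{Third estimate (accumulated drift).} This is the main work. Split $\mathcal A_h=2\sum_{r<h}b_r$ into three parts.

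The collar contributes $2w\cdot\tfrac{n}{2n}=w=\rho n+O(1)$.

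Each transition annulus contributes at most $2n/M_{b'+1}\le 2/\eta$, and there are at most $B$ of them. Their total is therefore $O(\sqrt n/\eta)=o(n)$. If a transition happens to be an equal-length annulus, the bound is only smaller.

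The equal-length annuli of block $b'<b$ contribute $2L^\ast\cdot n/(2M_{b'})=L^\ast n/M_{b'}$. The partial block $b$ contributes $sn/M_b$. So the main part is
\[
n\Bigl(\sum_{b'<b}\frac{L^\ast}{M_{b'}}+\frac{s}{M_b}\Bigr)
= n\Bigl(\sum_{b'<b}\frac{L^\ast/n}{M_{b'}/n}+\frac{s/n}{M_b/n}\Bigr).
\]
This is a left Riemann sum for $I(\tau_h)=\int_0^{\tau_h}du/q(u)$, with mesh $L^\ast/n\le\Delta_n$ and last piece of length $s/n$. Compare it termwise to the integral over the intervals $[b'L^\ast/n,(b'+1)L^\ast/n]$ and $[bL^\ast/n,\tau_h]$. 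This needs three facts. First, $M_{b'}/n=q(t_{b'})+O(1/n)$. Second, $t_{b'}$ is within $O(B/n)$ of $b'L^\ast/n$. Third, $1/q$ is Lipschitz on $[0,t_\eta]$ with constant $2/\eta^3$, and satisfies $1/q\le1/\eta$. Each subinterval then carries error $O\bigl(\Delta_n(\Delta_n+B/n+1/n)\bigr)$ relative to its integral. Summing over at most $B$ blocks gives total error
\[
O\bigl(B\Delta_n(\Delta_n+n^{-1/2})\bigr)=O(n^{-1/2}),
\]
uniformly in $b$ and $s$. Multiplying by $n$ gives an $o(n)$ error.

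\emph{Main obstacle.} The step most likely to need care is this Riemann-sum comparison, specifically making sure every constant depends only on $\eta$ and not on $h$. That uniformity comes from two ingredients already used above: $\tau_h$ never exceeds $t_\eta$, so $q\ge\eta$ throughout, and $B\Delta_n=t_\eta$ is bounded.
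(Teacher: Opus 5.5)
Your proposal is correct and follows essentially the same route as the paper: the depth count via the collar plus at most $B$ transitions, the Lipschitz bound for $q$ on $[0,t_\eta]$, and the three-way split of $\mathcal A_h$ into collar, transition annuli (at most $B$ of them, each contributing at most $2/\eta$), and a partial left Riemann sum for $I(\tau_h)$. Your explicit $(b,s)$ parametrization and Lipschitz-based $O(n^{-1/2})$ error bounds make quantitative what the paper handles by uniform convergence of partial Riemann sums; this includes the checks that $\tau_h\le t_\eta$ and that $t_{b'}$ is within $O(B/n)$ of $b'L^\ast/n$.
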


\begin{proof}
The protective collar has
\[
w=\rho n+O(1)
\]
layers.  Before reaching $C_h$, the number of transition annuli in the main region is at most $B=O(\sqrt n)=o(n)$.  Therefore
\[
h=w+g_h+O(B)+O(1),
\]
uniformly in $h$, which gives
\[
2h=2\rho n+2\tau_h n+o(n).
\]

Next estimate $m_h$.  If $h$ lies in the $b$-th equal-length block, then $m_h=M_b$.  If $h$ lies at a transition endpoint, then $m_h$ is one of $M_b$ and $M_{b+1}$.  Hence
\[
\frac{m_h}{n}=q(t_b)+O\left(\frac1n\right)
\quad\text{or}\quad
\frac{m_h}{n}=q(t_{b+1})+O\left(\frac1n\right).
\]
The block width is $\Delta_n=t_\eta/B$, and the accumulated discrepancy coming from floors and from transition annuli is $O(B/n)$.  Thus $\tau_h$ differs from the relevant endpoint $t_b$ or $t_{b+1}$ by at most
\[
\Delta_n+O\left(\frac{B}{n}\right)=o(1).
\]
Since $q(t)\ge\eta$ on $[0,t_\eta]$, the function $q$ is Lipschitz on this interval.  Therefore
\[
\frac{m_h}{n}=q(\tau_h)+o(1)
\]
uniformly in $h$.

It remains to estimate the accumulated drift $\mathcal A_h$.  The protective collar contributes exactly
\[
w=\rho n+O(1),
\]
because each collar annulus has length $n$ and contributes $2\cdot n/(2n)=1$ to $\mathcal A_h$.

Now consider the equal-length annuli in the main region.  Let $\ell_b(h)$ be the number of equal-length annuli in the $b$-th block passed before reaching $C_h$.  Each such annulus has length $M_b$, so each contributes
\[
2\cdot \frac{n}{2M_b}=\frac{n}{M_b}
\]
to $\mathcal A_h$.  Therefore the equal-length main-region contribution is
\[
\sum_b \ell_b(h)\frac{n}{M_b}.
\]
Since
\[
M_b=nq(t_b)+O(1)
\]
and $q(t_b)\ge\eta$, we have
\[
\frac{n}{M_b}=\frac1{q(t_b)}+O\left(\frac1n\right)
\]
uniformly in $b$.  Because $\sum_b\ell_b(h)=O(n)$, it follows that
\[
\frac1n\sum_b \ell_b(h)\frac{n}{M_b}
=
\sum_b\frac{\ell_b(h)}{n}\frac1{q(t_b)}+o(1).
\]
The last sum is a partial left-endpoint Riemann sum for the function $1/q$ over the interval $[0,\tau_h]$.  Since $q\ge\eta$ on $[0,t_\eta]$, the function $1/q$ is uniformly continuous there.  The mesh size $\Delta_n=t_\eta/B$ tends to $0$, and hence these partial Riemann sums converge uniformly in $h$ to
\[
I(\tau_h)=\int_0^{\tau_h}\frac{du}{q(u)}.
\]
Thus the equal-length main-region contribution is
\[
nI(\tau_h)+o(n).
\]

Finally, there are at most $B$ transition annuli before any layer $C_h$.  Every main-region cycle has length at least $\eta n$.  Hence each transition annulus contributes at most
\[
2\cdot \frac{n}{\eta n}=\frac{2}{\eta}
\]
to $\mathcal A_h$.  The total transition contribution is therefore
\[
O\left(\frac{B}{\eta}\right)=o(n),
\]
with $\eta$ fixed.  Combining the collar, equal-block, and transition contributions gives
\[
\mathcal A_h=\rho n+nI(\tau_h)+o(n)
\]
uniformly in $h$.
\end{proof}

\section{The core inequality in edge units}

The next lemma is the analytic heart of the construction.  It is stated directly in edge-count units.

\begin{lemma}[Core inequality]\label{lem:core}
Let
\[
q(t)=\sqrt{1-4t},
\qquad
I(t)=\int_0^t\frac{du}{q(u)}.
\]
Then, for every $0\le t\le t_\eta$ and every $0\le S\le n/2$,
\[
2tn+q(t)\bigl(S-nI(t)\bigr)_+\ge S.
\]
\end{lemma}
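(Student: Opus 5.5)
The plan is to compute $I$ in closed form, so that the inequality turns into an elementary algebraic statement in the single parameter $q=q(t)\in[\eta,1]$. Substituting $v=1-4u$ gives
\[
I(t)=\int_0^t (1-4u)^{-1/2}\,du=\frac{1-\sqrt{1-4t}}{2}=\frac{1-q(t)}{2}.
\]
The definition of $q$ gives $4t=1-q^2$, so the radial term is $2tn=\tfrac n2(1-q^2)$. Both $2tn$ and $nI(t)$ are therefore expressed through $q$ alone, and the claim becomes: for $q\in[\eta,1]$ and $0\le S\le n/2$,
\[
\frac n2(1-q^2)+q\Bigl(S-\frac n2(1-q)\Bigr)_+\ge S.
\]

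I will split into two cases according to the sign of the positive part.

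\emph{Case $S\le nI(t)=\tfrac n2(1-q)$.} The positive part vanishes, so we need $\tfrac n2(1-q^2)\ge S$. Since $0\le q\le 1$, we have $1-q\le(1-q)(1+q)=1-q^2$. Hence $S\le \tfrac n2(1-q)\le \tfrac n2(1-q^2)=2tn$.

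\emph{Case $S>\tfrac n2(1-q)$.} Expanding the left-hand side gives
\[
\frac n2(1-q)(1+q)-\frac n2 q(1-q)+qS=\frac n2(1-q)+qS.
\]
The inequality $\tfrac n2(1-q)+qS\ge S$ is equivalent to $(1-q)\bigl(\tfrac n2-S\bigr)\ge0$. This holds because $q\le1$ and $S\le n/2$.

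I expect no real obstacle here; the only point requiring care is the antiderivative computation and the identity $4t=1-q^2$. The computation also explains why the square-root profile is critical. In the second case the bound is an identity up to the factor $(1-q)(\tfrac n2-S)$, so equality holds at $S=n/2$ for every $t$. This is why the protective collar is needed to supply strict slack in the later application of Lemma~\ref{lem:drift} and Lemma~\ref{lem:uniform}.
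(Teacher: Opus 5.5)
Your proof is correct and essentially the paper's own: the same closed forms $I(t)=(1-q)/2$ and $2t=(1-q^2)/2$, and the same two-case split on the positive part. In the second case, your factorization $(1-q)\bigl(\tfrac n2-S\bigr)\ge0$ is an explicit version of the paper's argument that the difference is decreasing in $S$ and vanishes at $S=n/2$.
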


\begin{proof}
Write $q=q(t)=\sqrt{1-4t}$.  Then
\[
2t=\frac{1-q^2}{2},
\qquad
I(t)=\int_0^t\frac{du}{\sqrt{1-4u}}=\frac{1-q}{2}.
\]
If $S\le nI(t)$, then it is enough to show $2tn\ge S$.  This follows from
\[
2t-I(t)=\frac{1-q^2}{2}-\frac{1-q}{2}=\frac{q(1-q)}{2}\ge0.
\]

Now suppose $S\ge nI(t)$.  We need to prove
\[
2tn+q(S-nI(t))\ge S.
\]
The left-hand side minus $S$ is
\[
2tn-qnI(t)-(1-q)S.
\]
As a function of $S$, this is decreasing.  Hence the worst case is $S=n/2$.  At $S=n/2$ we get
\[
2tn+q\left(\frac n2-nI(t)\right)-\frac n2
=n\left(\frac{1-q^2}{2}+q\cdot\frac q2-\frac12\right)=0.
\]
Thus the inequality holds for all $0\le S\le n/2$.
\end{proof}

\section{Distance estimate}

Let $x,y\in C_0$ and write
\[
L=d_{C_0}(x,y),
\qquad
0\le L\le \frac n2.
\]
We prove that every edge path $P:x\to y$ in $K_n$ has length at least $L$.

\subsection{Paths not entering the main region}

Suppose $P$ reaches deepest layer $C_h$ with $h\le w$.  All annuli involved are equal-length annuli of length $n$.  Therefore $\mathcal A_h=h$, and Lemma \ref{lem:drift} gives
\[
|P|\ge 2h+(L-h)_+.
\]
If $L\ge h$, then the right-hand side is $L+h\ge L$.  If $L<h$, then it is $2h>L$.  Thus $|P|\ge L$.

\subsection{Paths entering the main region but not the cone}

Assume $P$ enters the main region but does not enter the central cone.  By Lemmas \ref{lem:drift} and \ref{lem:uniform}, for the deepest layer $C_h$ there is $\tau=\tau_h\in[0,t_\eta]$ such that
\[
|P|
\ge
2\rho n+2\tau n
+q(\tau)\bigl(L-\rho n-nI(\tau)\bigr)_+
-\eps_n n,
\]
where $\eps_n\to0$ for fixed $\rho,\eta$.

If $L\le \rho n$, then entering the main region already forces a round trip across the protective collar, so
\[
|P|\ge 2\rho n\ge L.
\]
Now assume $L>\rho n$ and set
\[
S=L-\rho n.
\]
Then $0\le S\le n/2$.  By Lemma \ref{lem:core},
\[
2\tau n+q(\tau)(S-nI(\tau))_+\ge S.
\]
Therefore
\[
|P|
\ge
2\rho n+S-\eps_n n
=L+\rho n-\eps_n n.
\]
Since $\rho>0$ is fixed, for all sufficiently large $n$ we have $\eps_n\le \rho/2$, and hence
\[
|P|\ge L+\frac{\rho n}{2}\ge L.
\]

\subsection{Paths entering the central cone}

If $P$ enters the central cone, then it must cross the protective collar and the entire main region in and out.  Ignoring transition annuli only weakens the lower bound, so
\[
|P|
\ge
2w+2\sum_{b=0}^{B-1}L_b.
\]
Since
\[
w=\rho n+O(1),
\qquad
\sum_{b=0}^{B-1}L_b=nt_\eta+O(B),
\]
we obtain
\[
\frac{|P|}{n}
\ge
2\rho+2t_\eta-o(1)
=2\rho+\frac{1-\eta^2}{2}-o(1).
\]
Because $\eta^2<\rho$, the last expression is
\[
\frac12+2\rho-\frac{\eta^2}{2}-o(1)>\frac12+\frac{3\rho}{2}-o(1).
\]
For all sufficiently large $n$, this is greater than $1/2$.  Since $L\le n/2$, such a path has length strictly larger than $L$.

We have shown that for every pair of boundary vertices $x,y$,
\[
d_{K_n}(x,y)\ge d_{C_n}(x,y).
\]
The reverse inequality follows by walking along the boundary cycle itself.  Therefore
\[
d_{K_n}(x,y)=d_{C_n}(x,y)
\]
for all boundary vertices $x,y$.  Hence $K_n$ is an isometric filling of $C_n$.

\section{Counting vertices}

We now estimate the number of vertices.  The protective collar contributes
\[
(w+1)n=\rho n^2+o(n^2)
\]
vertices, where the extra $+1$ accounts for the original boundary cycle and is only $O(n)$.

The main contribution comes from the equal-length blocks:
\[
\sum_{b=0}^{B-1}L_bM_b.
\]
The transition cycles and block endpoints contribute only $O(Bn)=o(n^2)$ vertices, since $B=o(n)$ and every cycle length is at most $n$.

We now justify the Riemann-sum limit carefully.  Recall that
\[
\Delta_n=\frac{t_\eta}{B},
\qquad
L_b=\lfloor n\Delta_n\rfloor,
\qquad
M_b=\lceil nq(t_b)\rceil.
\]
Thus, uniformly in $b$,
\[
L_b=n\Delta_n+O(1),
\qquad
M_b=nq(t_b)+O(1).
\]
Therefore
\[
\frac{L_bM_b}{n^2}
=
\Delta_n q(t_b)
+O\left(\frac{\Delta_n}{n}\right)
+O\left(\frac1n\right)
+O\left(\frac1{n^2}\right).
\]
Summing over $b=0,\dots,B-1$ gives
\[
\frac1{n^2}\sum_{b=0}^{B-1}L_bM_b
=
\sum_{b=0}^{B-1}\Delta_n q(t_b)+o(1),
\]
because
\[
\sum_{b=0}^{B-1}\Delta_n=t_\eta,
\qquad
\frac{B}{n}\to0.
\]
Since $B\to\infty$, the mesh size $\Delta_n=t_\eta/B$ tends to $0$.  The function $q$ is continuous on $[0,t_\eta]$, so the left endpoint Riemann sums converge:
\[
\sum_{b=0}^{B-1}\Delta_n q(t_b)
\longrightarrow
\int_0^{t_\eta}q(t)\,dt.
\]
Hence
\[
\frac1{n^2}\sum_{b=0}^{B-1}L_bM_b
\longrightarrow
\int_0^{t_\eta}q(t)\,dt.
\]
For $q(t)=\sqrt{1-4t}$ and $t_\eta=(1-\eta^2)/4$, this integral is
\[
\int_0^{t_\eta}\sqrt{1-4t}\,dt
=\frac14\int_{\eta^2}^{1}u^{1/2}\,du
=\frac16(1-\eta^3).
\]
The cone cap adds only one new vertex.  Therefore, for fixed $\rho$ and $\eta$,
\[
\limsup_{n\to\infty}\frac{|V(K_n)|}{n^2}
\le
\rho+\frac16(1-\eta^3)
\le
\rho+\frac16.
\]

For every $\rho>0$, choose $0<\eta<\sqrt\rho$.  The construction above gives isometric fillings satisfying
\[
\limsup_{n\to\infty}\frac{|V(K_n)|}{n^2}
\le \frac16+\rho.
\]
Since $\rho>0$ is arbitrary,
\[
D^*\le\frac16.
\]
This proves Theorem \ref{thm:main}.

\end{document}